\newcounter{commentcounter}
\renewcommand*{\backref}[1]{}
\renewcommand*{\backrefalt}[4]
{
    \ifcase #1
        No citation in the text.
    \or
        Cited on Page #2.
    \else
        Cited on Pages #2.
    \fi
}
\newtheorem{thm}{Theorem}[section]
\newtheorem{corollary}[thm]{Corollary}
\newtheorem{prop}[thm]{Proposition}
\newtheorem{prob}{Problem}
\theoremstyle{definition}
\theoremstyle{plain}
    \newtheoremstyle{TheoremNum}
        {8.0pt plus 2.0pt minus 4.0pt}{8.0pt plus 2.0pt minus 4.0pt} 
        {\itshape} 
        {-0.15cm} 
        {\bfseries} 
        {.} 
        { }  
        {\thmname{#1}\thmnote{ \bfseries #3}}
    \theoremstyle{TheoremNum}
\newcommand*{\claimproofname}{My proof}
\DeclareMathOperator{\aster}{\text{\LARGE{\textasteriskcentered}}}
\DeclareMathOperator{\Aut}{\mathrm{Aut}}
\DeclareMathOperator{\Out}{\mathrm{Out}}
\DeclareMathOperator{\sym}{Sym}
\DeclareMathOperator{\Stab}{\mathrm{Stab}}
\newcommand{\calc}{{\mathcal{C}}}
\newcommand{\calf}{{\mathcal{F}}}
\newcommand{\GL}{\mathrm{GL}}
\newcommand{\PSL}{\mathrm{PSL}}
\newcommand{\Sp}{\mathrm{Sp}}
\newcommand{\Mod}{\mathrm{Mod}}
\newcommand{\CAT}{\mathrm{CAT}}
\newcommand{\onto}{\twoheadrightarrow}
\newcommand{\betti}{b^{(2)}}
\def\Z{\mathbb{Z}}
\newcommand{\NN}{\mathbb{N}}
\newcommand{\ZZ}{\mathbb{Z}}
\newcommand{\RR}{\mathbb{R}}
\newcommand{\QQ}{\mathbb{Q}}
\newcommand{\FF}{\mathbb{F}}
\newcommand{\CS}{\mathcal{C}(S)} 
\newcommand{\DV}{\mathcal{D}(V)} 
\newcommand{\NS}{\mathcal{NS}} 
\newcommand{\EG}{\mathrm{EG}} 
\tikzstyle{blackNode}=[fill=black, draw=black, shape=circle]
\title[Problems on handlebody groups]{Problems on handlebody groups}
\author{Naomi Andrew}
\author{Sebastian Hensel}
\author{Sam Hughes}
\author{Richard D. Wade}
\address[N. Andrew, R. Wade]{Mathematical Institute, Andrew Wiles Building, Observatory Quarter, University of Oxford, Oxford OX2 6GG, UK}
\email{naomi.andrew@maths.ox.ac.uk}
\email{wade@maths.ox.ac.uk}
\address[S. Hensel]{Mathematisches Institut der LMU M\"unchen, Theresienstr. 39, 80333 M\"unchen, Germany}
\email{hensel@math.lmu.de}
\address[S. Hughes]{Rheinische Friedrich-Wilhelms-Universit\"at Bonn, Mathematical Institute, Endenicher Allee 60, 53115 Bonn, Germany}
\email{sam.hughes.maths@gmail.com}\email{hughes@math.uni-bonn.de}
\date{\today}
\subjclass[2020]{Primary 20F65; Secondary 20E18, 20E26, 20E36, 20J05, 20J06, 57K20, 57M07, 57M99}
\begin{document}
\begin{abstract}
We survey a number of constructions and open problems related to the handlebody group, with a focus on recent trends in geometric group theory, (co)homological properties, and its relationship to outer automorphism groups of free groups.  We also briefly describe how the \emph{cheap $\alpha$-rebuilding property} of Abert, Bergeron, Fraczyk, and Gaboriau can be applied using the disc complex to deduce results about the homology growth of the handlebody group.
\end{abstract}
\maketitle

\addtocontents{toc}{\protect\setcounter{tocdepth}{0}}

\section*{Introduction} 

A \emph{handlebody group} is the mapping class group $\Mod(V_g)$ of a solid handlebody $V_g$, which is a 3-manifold obtained by either filling in a 2-dimensional surface $S_g$, or alternatively by attaching 3-handles to a solid 3-ball. Handlebodies are fundamental objects in the theory of three-dimensional manifolds, particularly through the theory of Heegaard splittings. 

A key motivation for studying mapping class groups of handlebodies  from the point of view of geometric group theory is that they sit between mapping class groups of surfaces and outer automorphism groups: the handlebody group $\Mod(V_g)$ admits a surjective map to $\Out(F_g)$ and an injective map to $\Mod(S_g)$:
\[\begin{tikzcd}
\Mod(V_g) \arrow[r, tail] \arrow[d, two heads]   &\Mod(S_g)  \\
  \Out(F_g)& 
\end{tikzcd}\]
The horizontal arrow is obtained by restricting the action of a mapping class to the boundary $\partial V_g=S_g$, and the vertical arrow is obtained by looking at the induced action on the fundamental group $\pi_1(V_g)=F_g$. Hence handlebody groups provide a crucial link between two of the most well-studied families in the area. An introduction to handlebody groups from this perspective is given in \cite{Hensel_primer}.

The main purpose of this article is to draw attention to a series of interesting open problems on handlebody groups. We also take the opportunity to show how recent work of Abert, Bergeron, Fraczyk, and Gaboriau allows one to prove vanishing results for $\ell^2$-Betti numbers of handlebody groups (see Theorem~\ref{t:rebuilding}). 

 Other than some notation and definitions given in Section~\ref{s:spaces}, the sections can be read independently, and we invite the reader to choose their favourite topics from the list below:


\setcounter{tocdepth}{1}
\tableofcontents

\subsection*{Acknowledgements}
Thanks to Mladen Bestvina, Benjamin Br\"uck, Marissa Chesser, Ursula Hamenst\"adt, Thomas Ng, Ulrich Oertel, Dan Petersen, Saul Schleimer and Katie Vokes for helpful correspondence during the preparation of this article.

This work has received funding from the European Research Council (grant no. 850930). Andrew and Wade are supported by the Royal Society of Great Britain.  Hughes is supported by a Humboldt fellowship at Universit\"at Bonn.

\addtocontents{toc}{\protect\setcounter{tocdepth}{1}}

\section{The disc complex and handlebody horoballs}\label{s:spaces}


The mapping class group of a surface acts on its associated \emph{curve complex} and \emph{Teichm\"uller space}. As every handlebody group is a subgroup of a mapping class group of a surface it inherits an action on these spaces. However it can be more useful to work with subcomplexes or subspaces which are invariant under the handlebody group,  namely the \emph{disc complex} and \emph{handlebody horoballs}. 

To set up some notation used throughout: we use $V=V_g^{b,p}$ to denote a handlebody of genus $g$ with $b$ marked discs and $p$ marked punctures. Its boundary is a surface $S_g^{b,p}$ with $b$ marked boundary curves and $p$ marked punctures. In most cases, the questions we ask are interesting enough in the closed case and the marked punctures or discs can be dropped.

\subsection{The disc complex I: definition and algebraic properties of stabilisers}
The \emph{curve complex} $\mathcal{C}(S)$ associated to a surface has a vertex for each isotopy class of essential, nonperipheral, simple closed curves on $S$. A set of curves spans a simplex in $\CS$ if and only if their isotopy classes can be realised disjointly.
 
 Recall that we have fixed a handlebody $V$ and an identification $\partial V =S$ of its boundary with $S$. A \emph{meridian} on $S$ is a curve bounding a disc in $V$. Such a disc is determined up to isotopy by its bounding curve. The \emph{disc complex} $\DV$ is the full subcomplex of $\CS$ spanned by meridians. As meridians are preserved by $\Mod(V)$, there is an action of the handlebody group on $\DV$. Simplex stabilisers in $\DV$ can be understood inductively. Let $\sigma$ be a simplex of $\DV$ with vertices given by meridians $c_0,\ldots,c_k$ which span disjoint discs $d_0, \ldots, d_k$ in $V$. Then one can cut along these discs to obtain handlebodies $V_1,\ldots, V_l$ with marked discs (given by the $d_i$). Let $V_i^\circ$ be the handlebody where each disc is replaced by a marked point. The simplex stabiliser in $\Mod(V)$ has a finite-index subgroup $\Stab^0(\sigma)$ which fixes each vertex. This fits into an exact sequence 
 \begin{equation}\label{eqn.stab}
1\to \mathbb{Z}^k \to \Stab^0(\sigma) \to \oplus_{i=1}^l \Mod(V_i^\circ) \to 1,    
 \end{equation} 
where $\mathbb{Z}^k$ is generated by Dehn twists along the meridians. This allows for inductive arguments on complexity of the handlebody. On the other hand, a natural question arises when studying the action of $\Mod(V)$ on $\CS$.

\begin{prob}[Stabilisers of arbitrary curves]\label{prob:stab_in_CS}
Given an arbitrary multicurve $\sigma \in \mathcal{C}(S)$, describe the stabiliser of $\sigma$ in $\Mod(V)$. For example, is it finitely generated?
\end{prob}

There is significant progress in this direction in the work of Oertel, who proved an analogue of the Nielsen--Thurston classification theorem in the setting of the handlebody group \cite{Oertel2002}, although their focus is on the existence of \emph{reducing surfaces} in the handlebody, rather than full stabilisers of incompressible curves. In particular, \cite[Theorem~4.1]{Oertel2002} decomposes a mapping class of a handlebody into a number simpler pieces via cutting along reducing surfaces in $V$. Section~9 of Oertel's paper already contains a number of interesting problems; we will not list them all and instead highlight \cite[Question~9.11]{Oertel2002}:

\begin{prob}[Uniqueness of surface reduction systems (Oertel)] \label{prob:Oertel}
Is the `Nielsen-Thurston' decomposition of a mapping class along reducing surfaces given by \cite[Theorem~4.1]{Oertel2002} unique?
\end{prob}

See also \cite{Carvalho2006} for further work on the behaviour of individual elements. Such a decomposition theorem could be important in relation to the following problem, which is also open:

\begin{prob}[The conjugacy problem]
Is the conjugacy problem solvable in $\Mod(V_g)$?
\end{prob}

While the conjugacy problem is solved in the mapping class group \cite{Hemion1979,Tao2013}, its decidability does not pass to subgroups, not even those of finite index \cite{CollinsMiller}.

\subsection{The disc complex II: large-scale geometry}

The large scale geometry of the disc complex was studied extensively in work of Masur and Schleimer \cite{MasurSchleimer2013} and Hamenst\"adt \cite{HamenstadtAsymptoticI,HamenstadtAsymptoticII, HamenstadtSpottedI, HamenstadtSpottedII}. When $V$ has no marked points or discs:

\begin{itemize}
\item The disc graph is a quasi-convex subspace of the curve graph \cite{MasurMinskyQuasiconvexity}.
\item Despite this, the embedding $\DV \hookrightarrow \CS$ is not a QI embedding. The failure of this to be a QI embedding is controlled by subspaces known as \emph{witnesses}, or \emph{holes} \cite{MasurSchleimer2013}.
\item However, $\DV$ is still Gromov-hyperbolic \cite{MasurSchleimer2013}.
\end{itemize}    

When $V$ has marked points or discs on the boundary, the disc complexes are said to be \emph{spotted}, and Hamenst\"adt showed that when the genus is at least two, disc graphs with one or two spots are no longer hyperbolic. Furthermore, when $g \geq 4$ and there are two spots, the spotted disc graph has infinite asymptotic dimension \cite{HamenstadtSpottedI,HamenstadtAsymptoticII}. To highlight one interesting open problem: while curve complexes are known to be \emph{uniformly hyperbolic}, in the sense that the hyperbolicity constant does not depend on the genus of the surface \cite{Aougab2013, Bowditch2014, CRS2014, HPW2015}, this is not known for the family of disc complexes. The following problem was suggested by Thomas Ng and Saul Schleimer:

\begin{prob}[Uniform hyperbolicity]
Are the disc complexes uniformly hyperbolic?
\end{prob}
 
\subsection{Handlebody horoballs and classifying spaces} 
 
A system of meridians is \emph{simple} if the associated curves cut the surface up into pieces of genus zero, or equivalently the associated disc system cuts the handlebody up into balls.

Restricting to the closed case for simplicity, let $\mathcal{T}_g$ be the Teichm\"uller space of the associated surface. Fix $0<\epsilon< \log(3+\sqrt{8})$. We define $\mathcal{H}^\epsilon_g \subset \mathcal{T}_g$ to be the set of points $\sigma \in \mathcal{T}_g$ such that the set of curves on $\sigma$ of length strictly less than $\epsilon$ form a simple system of meridians on $S$. When $g=1$, Teichm\"uller space is the hyperbolic plane and $\mathcal{H}^\epsilon_g$ is a horoball associated to the unique meridian in $V_g$. In general we will refer to $\mathcal{H}^\epsilon_g$ as a \emph{handlebody horoball}. 

Handlebody horoballs are contractible \cite{HainautPetersen2023, PetersenWade2024}, so $\mathcal{H}^\epsilon_g/\Gamma$ is a classifying space for every torsion-free subgroup $\Gamma < \Mod(V)$. However, one can ask if $\mathcal{H}^\epsilon_g$ is also a \emph{classifying space for proper actions} for $\Mod(V)$. This is equivalent to showing that fixed point sets of finite subgroups of $\Mod(V)$ are contractible. While this is true for $\mathcal{T}_g$ \cite{JiWolpert2010}, it is not clear how the fixed-point sets of finite subgroups of $\Mod(V)$ behave in $\mathcal{H}_g^\epsilon$.

\begin{prob}[Classifying space for proper actions]
    Is $\mathcal{H}_g^\epsilon$ a classifying space for proper actions of $\Mod(V)$, for all sufficiently small $\epsilon$?
\end{prob}

A possible approach to this problem is the \emph{dismantling} technique of Hensel, Osajda and Przytycki, which gives contractibility for the fixed point sets of finite subgroups of $\Mod(V)$ acting on the disc complex \cite[Metatheorem~C]{HOP2014}.

In the same way that moduli space is a complex algebraic variety, one can also ask for algebro-geometric properties of classifying spaces associated to the handlebody group. The following problem was suggested to us by Dan Petersen:

\begin{prob}[Varieties and classifying spaces]
Define a log scheme or log stack whose analytification is a classifying space for the handlebody group.
\end{prob}

Petersen also provided the following motivation:
in \cite{HainautPetersen2023}, Hainaut and Petersen describe the image $\mathcal{H}^\epsilon_g/\Mod(V_g)$ of a handlebody horoball as a kind of deleted neighbourhood in moduli space, and log geometry can be used to understand deleted neighbourhoods. Furthermore, Giansiracusa describes the handlebody modular operad as the derived modular envelope of the framed little disc operad \cite{Giansiracusa2011}, and the framed little disc operad can be described as an operad in log schemes by a construction of Vaintrob \cite{Vaintrob2019}. Hence one should expect an associated log scheme (or stack). See Section~8 of \cite{LogSchemeStuff} for an exposition of log schemes and related topics.

\section{Problems on large-scale geometry}

\subsection{Isoperimetic functions}
The \emph{Dehn} or \emph{first isoperimetric function} of a finitely presented group $G$ measures the `difficulty' of filling loops in any Cayley graph of $G$ with a finite generating set.  The functions do not depend on the choice of finite presentation.

The isoperimetric functions of the Handlebody groups were computed by Hamenst\"adt and Hensel: for $g \geq 3$, the first isoperimetric functions of $\Mod(V_g)$ and $\Mod(V_{g,1})$ are exponential \cite[Theorem~1.1]{HamenstadtHensel2021}. In contrast, the genus $2$ case behaves quite differently: they show that the group $\Mod(V_2)$ admits a proper cocompact action on a $\CAT(0)$ cube complex \cite[Theorem~1.2]{HamenstadtHensel2021}.  In particular, $\Mod(V_2)$ has quadratic Dehn function.

One can also define \emph{higher isoperimetric functions} of a group with finite type Eilenberg--Maclane space.  Roughly speaking, the $n$th isoperimetric function measures the difficulty of filling an $n$-sphere with an $(n+1)$-disc in the universal cover of any finite type $K(G,1)$ space. It remains open what the higher isoperimetric functions of the handlebody group are.

\begin{prob}[Isoperimetry]
    What are the higher isoperimetric functions of the handlebody groups?
\end{prob}

One can also define \emph{homological isoperimetric functions} of a group admitting a normed finite type free resolution $F_\bullet \to \RR$.  Roughly, the $n$th homological isoperimetric function measures the difficulty of filling an $n$-cycle in $F_\bullet$ with an $(n+1)$-chain.  In general the isoperimetric functions and the homological isoperimetric functions are not equal \cite{ABDY2013,brady2020homological}.

\begin{prob}[Homological isoperimetry]
    What are the homological isoperimetric functions of the handlebody groups?
\end{prob}

In another direction, one can also ask if there is a way of isolating the exponential part of the Dehn function of $\Mod(V_g)$. One way of stating this precisely is via the notion of \emph{relative Dehn functions}, as defined by Osin in \cite{Osin2006}. The following problem was suggested by Ursula Hamenst\"adt:

\begin{prob}[Coning off exponential behaviour]
Does there exist a family of subgroups $\mathcal{H}=\{H_\lambda\}_{\lambda \in \Lambda}$ such that $\Mod(V_g)$ is finitely presented relative to $\mathcal{H}$ and so that the relative Dehn function with respect to $\mathcal{H}$ is quadratic?
\end{prob}

Similar questions, which we have not attempted to make precise, can be asked with respect to the existence of \emph{relative hierarchical} structures for $\Mod(V_g)$, which leads us on to our next topic.

\subsection{Hierarchical hyperbolicity and acylindrical actions}
The theory of hierarchically hyperbolic groups (HHGs) was introduced in \cite{BehrstockHagenSisto2017,BehrstockhagenSisto2019} and is now a highly active area of research.  The prototypical examples of HHGs are mapping class groups \emph{ibid.} and many $\CAT(0)$ cubical groups \cite{HagenTsu2020} (see \cite{shepherd2022cubulation} for a quantification of `many'). At its core a \emph{hierarchically hyperbolic structure} on a group $G$ encodes a wealth of geometric information about quasi-flats and hyperbolic directions in Cayley graphs, as well as giving a model geometry with which to study the group.  Chesser completely described which handlebody groups of closed surfaces are HHGs.

\begin{thm}~\cite[Theorem~1.2]{Chesser2022}
    The group $\Mod(V_g)$ is a hierarchically hyperbolic group if and only if $g\leq 2$.
\end{thm}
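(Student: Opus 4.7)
The plan is to split the biconditional into two implications and handle each with quite different tools, since the obstruction to being an HHG in high genus is very different from the structural input in low genus.

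For the ``if'' direction ($g \leq 2$), the genera $g=0$ and $g=1$ are essentially trivial: $\Mod(V_0)$ is finite and $\Mod(V_1)$ is virtually cyclic, and finite or virtually cyclic groups are HHGs in a degenerate sense (a single bounded hyperbolic piece). The interesting case is $g=2$, where I would invoke \cite[Theorem~1.2]{HamenstadtHensel2021}: the group $\Mod(V_2)$ acts properly and cocompactly on a $\CAT(0)$ cube complex $X$. To upgrade this to a hierarchically hyperbolic structure, I would try to verify that $X$ admits a \emph{factor system} in the sense of Behrstock--Hagen--Sisto, since any cocompactly cubulated group whose cube complex admits a factor system is automatically an HHG. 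Concretely, I would analyse the hyperplane structure of the Hamenst\"adt--Hensel cube complex and exhibit a $\Mod(V_2)$-invariant collection of convex subcomplexes which is closed under gate projections and has finite multiplicity, as required by the factor system axioms. Alternatively, one could try to directly build the HHG structure from combinatorial data on the relevant disc/curve complexes of $V_2$.

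For the ``only if'' direction ($g \geq 3$), I would use a purely quasi-isometric obstruction. The key fact is that every HHG satisfies a quadratic isoperimetric inequality, i.e.\ has Dehn function bounded above by a quadratic polynomial (this is due to Behrstock--Hagen--Sisto). On the other hand, \cite[Theorem~1.1]{HamenstadtHensel2021} states that for $g \geq 3$ the Dehn function of $\Mod(V_g)$ is exponential. These two facts are incompatible, so $\Mod(V_g)$ cannot carry an HHG structure. Since the Dehn function is a quasi-isometry invariant (for finitely presented groups) and HHG structures would be preserved by the Cayley graph, no choice of structure can repair this.

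The main obstacle is the $g=2$ case: the exponential Dehn function obstruction makes the high-genus direction essentially a one-line consequence of Hamenst\"adt--Hensel, whereas building an explicit HHG structure on $\Mod(V_2)$ requires understanding precisely which convex subcomplexes of the Hamenst\"adt--Hensel cube complex form a factor system. A secondary subtlety is that not every proper cocompact action on a $\CAT(0)$ cube complex yields an HHG structure in general, so one genuinely needs to exploit features specific to the construction for $V_2$, for example by showing that the stabilisers of meridian systems (which by \eqref{eqn.stab} are virtually built out of abelian-by-mapping-class-group pieces for small handlebodies) assemble into the standard product regions expected from an HHG.
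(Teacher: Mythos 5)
Your proposal matches the argument the paper sketches (which is Chesser's): the $g \geq 3$ direction is exactly the quadratic-Dehn-function obstruction for HHGs against the exponential Dehn function of \cite[Theorem~1.1]{HamenstadtHensel2021}, and the $g=2$ direction is exactly the construction of an explicit factor system on the Hamenst\"adt--Hensel $\CAT(0)$ cube complex from \cite[Theorem~1.2]{HamenstadtHensel2021}. You correctly flag the real content, namely that cocompact cubulation alone does not suffice and the factor system must be exhibited, which is precisely what Chesser does.
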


As HHGs have quadratic Dehn functions, Hamenst\"adt and Hensel's work implies that $\Mod(V_g)$ is not a HHG when $g \geq 3$. Chesser shows  that $\Mod(V_2)$ is a HHG by constructing an explicit \emph{factor system} on the $\CAT(0)$ cube complex defined in \cite[Theorem~1.2]{HamenstadtHensel2021}.





Given the lack of hierarchical hyperbolicity, one can instead turn to \emph{acylindrical hyperbolicity}. Indeed $\Mod(V)$, viewed as a subgroup of the mapping class group $\Mod(S)$, contains many independent pseudo-Anosov elements (such elements can be constructed by either composing high powers of Dehn twists about filling meridians or by lifting fully irreducible elements of $\Out(F_g)$ -- see \cite{Carvalho2006, Hensel_primer}). As the action of $\Mod(S)$ on $\mathcal{C}(S)$ is acylindrical, this implies:

\begin{prop}
The action of $\Mod(V_g)$ on $\mathcal{C}(S_g)$ is non-elementary and acylindrical for $g \geq 2$. Hence $\Mod(V_g)$ is an acylindrically hyperbolic group.
\end{prop}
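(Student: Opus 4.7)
The plan is to deduce the proposition from three standard ingredients: (i) acylindricity of the $\Mod(S_g)$-action on $\CS$ (Bowditch), (ii) the existence of pairs of independent pseudo-Anosov elements inside $\Mod(V_g)$, as flagged in the paragraph preceding the statement, and (iii) Osin's characterization of acylindrical hyperbolicity in terms of a non-elementary acylindrical action on a Gromov-hyperbolic space.

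First I would verify the acylindricity. Since $\Mod(V_g)$ embeds in $\Mod(S_g)$ via the restriction-to-boundary map, and acylindricity of an action on a metric space is inherited by any subgroup (the same constants work), Bowditch's acylindricity of the $\Mod(S_g)$-action on the curve graph $\CS$ immediately gives acylindricity of the induced $\Mod(V_g)$-action. This step is essentially formal.

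Next I would establish that the action is non-elementary, which by Osin's terminology means it admits two independent loxodromic elements (loxodromics with disjoint fixed point sets on the Gromov boundary). Here I would invoke the two constructions cited in the preceding paragraph of the excerpt, e.g.\ following Carvalho or the Hensel primer: for $g\ge 2$ one can take sufficiently high powers of Dehn twists about a filling pair of meridians, or alternatively lift a pair of independent fully irreducible elements of $\Out(F_g)$ along the surjection $\Mod(V_g)\twoheadrightarrow\Out(F_g)$, to produce elements of $\Mod(V_g)$ that act as pseudo-Anosovs on $S_g$. Since pseudo-Anosov mapping classes act as loxodromic isometries on $\CS$ with distinct pairs of fixed points on $\partial\CS$ (Masur--Minsky), by choosing the construction so that the two resulting pseudo-Anosovs have distinct stable/unstable laminations we obtain the two required independent loxodromics. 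I expect this to be the main (mildly) nontrivial step, since one has to be a little careful to ensure genuine independence on $\partial\CS$ rather than merely having two non-commuting loxodromics.

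Finally, with an acylindrical non-elementary action of $\Mod(V_g)$ on the hyperbolic space $\CS$ in hand, Osin's theorem concludes that $\Mod(V_g)$ is acylindrically hyperbolic for all $g\ge 2$. The hypothesis $g\ge 2$ enters in two places: it guarantees $\CS$ is a non-trivial hyperbolic graph, and it is needed to produce the pair of independent pseudo-Anosov handlebody mapping classes (in genus one there is essentially no room for such a pair).
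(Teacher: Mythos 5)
Your argument is correct and is essentially the same as the paper's: restrict Bowditch's acylindricity of the $\Mod(S_g)$-action on $\mathcal{C}(S_g)$ to the subgroup $\Mod(V_g)$, obtain non-elementarity from independent pseudo-Anosovs in $\Mod(V_g)$ (constructed via twists about filling meridians or lifts of fully irreducible elements, as in Carvalho and the Hensel primer), and conclude with Osin's criterion.
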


An element $g$ of an acylindrically hyperbolic group $G$ is a \emph{generalised loxodromic element} if there exists an acylindrical action of $G$ on a Gromov-hyperbolic space $X$ such that $g$ is loxodromic. In particular, the above shows that pseudo-Anosovs in $\Mod(V_g)$ and $\Mod(S_g)$ are generalised loxodromic elements. For $\Mod(S_g)$ there are no more generalised loxodromics \cite[Example~6.5(a)]{Osin2016}, however for $\Mod(V_g)$ we expect there may be more.

\begin{prob}[Understanding acylindrical actions I]
Describe the generalised loxodromic elements of $\Mod(V_g)$.
\end{prob}

A group has a \emph{universal acylindrical action} if it admits an acylindrical action on a hyperbolic space such that every generalised loxodromic element is loxodromic. The mapping class group action on the curve complex is an important example of a \emph{cobounded} universal acylindrical action. Such actions do not exist in general. Abbott showed that Dunwoody's inaccessible group admits no universal acylindrical action \cite{Abbott2016}. Furthermore, Abbott, Balasubramanya and Osin show that there are subgroups of hyperbolic groups which have universal acylindrical actions (coming from the hyperbolic group) yet have no universal acylindrical action which is cobounded \cite[Theorem~7.2]{ABO2019}.


\begin{prob}[Understanding acylindrical actions II] \hfill
\begin{enumerate}
    \item Is there a \emph{cobounded} acylindrical action of $\Mod(V_g)$ for which every pseudo-Anosov element acts loxodromically?
    \item Does $\Mod(V_g)$ admit a universal acylindrical action? If so, can this action be chosen to be cobounded?
\end{enumerate}
\end{prob}

While one might guess that one can use ths disc complex for the above, this is not the case: Chesser  shows that for $g \geq 2$ the action of $\Mod(V_g)$ on $\mathcal{D}(V_g)$ is not acylindrical \cite[Corollary~7.4]{Chesser2022}. One might instead try to start by \emph{electrifying} the disc complex, possibly by adding new cone points coming from the reducing systems found in \cite{Oertel2002}.

In a related direction, Bestvina--Bromberg--Fujiwara use a weakened notion of generalised loxodromics in the from of \emph{WWPD elements} to describe which elements of the mapping class group of a surface have positive \emph{stable commutator length} \cite{BBFscl}. One can also ask to what extent one can apply their ideas in the context of the handlebody group:

\begin{prob}[scl]
Describe the elements of $\Mod(V_g)$ with non-zero stable commutator length.
\end{prob}

This problem is very much related to Problems~\ref{prob:stab_in_CS} and \ref{prob:Oertel}: the lack of a full Nielsen--Thurston decomposition theory in $\Mod(V_g)$ means that more ideas (and we suspect more hyperbolic complexes), are required to use the methods from \cite{BBFscl}.

\subsection{Quasi-isometric rigidity}
It would be remiss of us not to mention the following interesting and likely-to-be incredibly difficult problem concerning the large-scale geometry of $\Mod(V_g)$:
\begin{prob}[QI-rigidity]
    Are handlebody groups quasi-isometrically rigid?
\end{prob}
Note that despite QI rigidity being a difficult open problem, handlebody groups are known to be rigid from the point of view of measure equivalence \cite{HenselHorbez2021} and commensurator rigidity \cite{Hensel2018}.









\section{Lifting problems in \texorpdfstring{$\Out(F_N)$}{Out(FN)}}

\subsection{Growth rates and growth type}

Given a group $G$ and an automorphism $\Phi \in \Out(G)$, we say that the exponential growth rate of $\Phi$ is  \[ \EG(\Phi) = \sup_{w \in G} \left\{\limsup \sqrt[n]{\| \Phi^n(w) \|}\right\}.\]

Here $\|w\|$ represents conjugacy length: the length of a shortest element in the conjugacy class with respect to some fixed, finite generating set; the growth rate is independent of the chosen basis and representative of $\Phi$.

There are two ways to associate a growth rate to an element of $\Mod(V_g)$: through the projection to $F_g$ and the inclusion to $\Mod(S_g)$. In the case of a pseudo-Anosov element, the growth rate is exactly the dilatation; for a reducible element the growth rate is a root of the dilatation associated to some pseudo-Anosov first return map on some complimentary surface, if  there are any. We say that ``the'' growth rate of an element of $\Mod(V_g)$ is that associated to its inclusion into $\Mod(S_g)$.

\begin{prop}
The projection map \[ \pi \colon \Mod(V_g) \to \Out(F_g)\] is decreasing on growth rates.
\end{prop}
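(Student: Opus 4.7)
The plan is to reduce the statement to the standard Lipschitz property of a surjective group homomorphism applied to the quotient map $q\colon \pi_1(S_g) \twoheadrightarrow \pi_1(V_g) = F_g$ induced by the inclusion $\partial V_g \hookrightarrow V_g$. By functoriality of $\pi_1$, for any $\phi \in \Mod(V_g)$ the automorphisms $\phi_* \in \Out(\pi_1(S_g))$ (from $\Mod(V_g) \hookrightarrow \Mod(S_g)$) and $\pi(\phi) \in \Out(F_g)$ fit into a commutative square
\[
\begin{tikzcd}
\pi_1(S_g) \arrow[r, two heads, "q"] \arrow[d, "\phi_*"'] & F_g \arrow[d, "\pi(\phi)"] \\
\pi_1(S_g) \arrow[r, two heads, "q"'] & F_g
\end{tikzcd}
\]
up to the usual ambiguity by inner automorphisms, which does not affect conjugacy length.

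First I would verify that $q$ is Lipschitz on conjugacy length. Fix finite generating sets $\Sigma_S$ and $\Sigma_F$ for $\pi_1(S_g)$ and $F_g$. Each $q(s)$ with $s \in \Sigma_S$ is a word of length at most some constant $C$ in $\Sigma_F^{\pm 1}$, so for every $w \in \pi_1(S_g)$ one has $\|q(w)\|_{F_g} \le C \|w\|_{S_g}$: a conjugacy-length minimizer of $[w]$ maps to a representative of $[q(w)]$ of length at most $C\|w\|_{S_g}$.

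Next, given $v \in F_g$, pick any lift $w \in q^{-1}(v)$ (using surjectivity of $q$). By the commutative diagram, $\pi(\phi)^n(v)$ is conjugate to $q(\phi_*^n(w))$, hence
\[
\|\pi(\phi)^n(v)\|_{F_g} \le C\, \|\phi_*^n(w)\|_{S_g}.
\]
Taking $n$th roots and $\limsup_n$ absorbs the constant $C$ and yields $\limsup_n \sqrt[n]{\|\pi(\phi)^n(v)\|_{F_g}} \le \EG(\phi)$. Taking the supremum over $v \in F_g$ gives the desired inequality $\EG(\pi(\phi)) \le \EG(\phi)$.

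There is no substantive obstacle; the statement is a formal consequence of the equivariance of $q$ together with the Lipschitz property of surjective homomorphisms between finitely generated groups. The only point requiring a line of care is that \emph{growth rate} is defined via conjugacy length rather than word length, but the Lipschitz estimate passes to conjugacy classes as above because a minimizer in $[w]$ is carried by $q$ to a valid representative of $[q(w)]$.
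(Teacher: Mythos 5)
Your argument is correct and is essentially the paper's own: both reduce to the fact that the surjection $q\colon \pi_1(S_g)\to F_g$ is equivariant for the two actions and Lipschitz on conjugacy length, so growth rates cannot increase under the quotient. The only cosmetic difference is that the paper extends a lifted basis of $F_g$ to a generating set of $\pi_1(S_g)$ so the Lipschitz constant is $1$, whereas you allow an arbitrary constant $C$ and absorb it with the $n$th root and $\limsup$, which is equally valid.
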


\begin{proof}
This is a general property of growth rates under quotient maps: extending a basis of $F_g$ to a generating set of $\pi_1(S_g)$ one sees that $\|w\|$ gives an upper bound for the length of its image in the quotient.
\end{proof}

Given an outer automorphism $\Phi$, we can consider its preimages $\pi^{-1}(\Phi)$.

\begin{prob}[Pulling back growth rates]
    Given $\Phi \in \Out(F_g)$, can \begin{equation} \label{eq:min_EG_of_preimages} \min \{\EG(\Psi) : \Psi \in \pi^{-1}(\Phi)\}\end{equation} be bounded in terms of $\EG(\Phi)$?
\end{prob}

In the case $g=2$, Kim and Seo \cite{KimSeo2023} show that for a fully irreducible element $\min \EG(\Psi) \leq 10 \EG(\Phi)$. 

A feature (or bug, depending on ones perspective) of the study of $\Out(F_g)$ when $g \geq 3$ is that the growth rates of an element and its inverse do not necessarily agree. The ratio $\ln\EG(\Phi)/\ln\EG(\Phi^{-1})$ is bounded above by a constant $C(g)$ \cite{HandelMosher2007} (and the dependence on $g$ is not an artifact of the proof \cite{DKL2014}). In the fully irreducible case an explicit $C(g)$ can be extracted from a proof due to Bestvina and Algom-Kfir \cite{BestvinaAlgomKfir2012}: it is exponential in $g$ (but not known to be optimal).

In $\Mod(S_g)$ there is no such asymmetry and the growth rate of a mapping class and its inverse coincide. This implies that $\max\{\EG(\Phi), \EG(\Phi^{-1})\}$ is a lower bound to \eqref{eq:min_EG_of_preimages}.

For those automorphisms where $\EG(\Phi)=1$, one can further investigate the degree of polynomial growth. Say an automorphism grows \emph{with degree $d$} if for every element $w$ there is a positive constant $B$ so that $\| \Phi^n(w) \| \leq Bn^d$, and for some elements there is a positive constant $A$ so that $An^d \leq \| \Phi^n(w) \|$. In $\Out(F_g)$ there are elements with polynomial growth up to degree $d=g-1$; in $\Mod(S_g)$ there are only periodic and linearly growing elements. 

In the periodic case, a classification of which finite subgroups can be lifted can be obtained by combining results in the literature. McCullough, Miller and Zimmermann \cite{McCulloughMillerZimmermann1989} obtain necessary and sufficient conditions for a finite subgroup $G < \Out(F_g)$ to lift in terms of the existence of a graph of groups $\mathcal{G}$ for the corresponding virtually free subgroup of $\Aut(F_g)$ satisfying certain properties. Furthermore, it follows for instance from Krsti\'c--Lustig--Vogtmann's work \cite[Section 2.4]{KrsticLustigVogtmann2001} that one can list all such $\mathcal{G}$. Taken together this gives a full, if complicated, classification.

The absence of higher degrees of polynomial growth in $\Mod(S_g)$ means that these elements of $\Out(F_g)$ can only be realised by exponentially growing elements of $\Mod(V_g)$, but for linearly growing elements we can ask:

\begin{prob}[Realising linear growth]
    Which linearly growing elements of $\Out(F_g)$ can be realised as linearly growing elements of $\Mod(V_g)$?
\end{prob}

Or more generally:

\begin{prob}[Type preserving lifts]
    Which subgroups of $\Out(F_g)$ have ``type-preserving'' lifts to $\Mod(V_g)$?
\end{prob}

\subsection{Maps from \texorpdfstring{$\Out(F_g)$}{Out(Fg)} and its subgroups}
We can also ask about maps from $\Out(F_g)$ to $\Mod(V_g)$. Hensel shows in \cite{Hensel2017} that there are no ``virtual sections'': if $\Gamma$ is a finite index subgroup of $\Out(F_g)$ then there are no maps $f: \Gamma \to \Mod(V_g)$ so that $\pi \circ f$ is the identity.

Weakening this, we can ask:

\begin{prob}[Quasi-sections]
    Does the quotient to $\Out(F_g)$ admit a quasi-section?
\end{prob}

Here, a quasi-section means a quasi-isometric embedding $f:\Out(F_g) \to \Mod(V_g)$ so that $\pi \circ f$ is at bounded distance from the identity.

Rather than looking at quasi-sections, one can instead consider homomorphisms:

\begin{prob}[Homomorphisms]
    Describe the set of homomorphisms $\Out(F_g) \to \Mod(V_g)$. What are their images?
\end{prob}

A more difficult version of the above problem is to replace $\Out(F_g)$ by a finite-index subgroup. We suspect that every such homomorphism has finite image. 


\section{Problems in the profinite completion}

The profinite completion $\widehat G$ of a finitely generated group $G$ is the inverse limit of finite quotients of $G$, that is \[\widehat G=\varprojlim_{N\trianglelefteqslant_{\text{f.i.}} G} G/N.\]
This is a profinite and hence compact totally disconnected Hausdorff group.  The group $G$ has a natural map $\iota\colon G\to \widehat G$ by $g\mapsto (gN)$, which is injective if and only if $G$ is residually finite.  Remarkably, the main result of \cite{DixonFormanekPolandRibes1982} states that the group $\widehat G$ is determined up to isomorphism by the set \[\calf(G):=\{G/N\ |\ N\trianglelefteqslant_{\text{f.i.}}G  \}/\cong, \]
that is the set of isomorphism types of finite quotients of $G$.  For a detailed picture of this topic see the books of Ribes--Zalesskii \cite{RibesZalesskii2000} and Wilkes \cite{Wilkes2024}.


Recently, there has been considerable interest in low-dimensional topology for studying profinite completions of groups, we provide a small sampling: for free and surface groups see \cite{jaikin2023}, for $3$-manifold groups see \cite{BridsonMcReynoldsReidSpitler2020,Liu2023,WiltonZalesskii2017,WiltonZalesskii2019}, for free-by-cyclic groups see \cite{HughesKudlinska2023}, for mapping class groups see \cite{Boggi2014,BoggiFunar2023curvepants}, and for braid groups see \cite{HoshiMinamideMochizuki2022,MinamideNakamura2022}.

\subsection{Congruence subgroup property}
There is another natural completion that one can take of a handle-body group $\Mod(V)$, namely, its \emph{congruence completion} $\widecheck \Mod (V)$.  This is defined as follows:  Let $H=\pi_1V$ and let $\calc(H)$ denote the set of finite index characteristic subgroups of $H$.  For each $K\in \calc(\pi_1V)$ we obtain a homomorphism $\alpha_K\colon \Mod(V)\to \Out(H/K)$ with image $Q_K$.  We now define $\widecheck \Mod(V)$ to be the inverse limit of the system of finite quotients $\alpha_K\colon \Mod(V)\onto Q_K$.    

\begin{prob}[Congruence subgroup property (CSP)]
    Is $\widehat\Mod (V) = \widecheck \Mod (V)$?
\end{prob}

Note that the CSP is still open for mapping class groups of genus at least 3.  It fails for $\GL_2(\Z)$ when viewed as the mapping class group of the torus.  CSP is known to hold for genus $0$ surfaces with punctures \cite{DiazDonagi1989} (also \cite{McReynolds2012}),  genus $1$ surfaces with punctures \cite{Asada2001} (see also \cite{BuxErshovRapinchuk2011}), genus $2$ surfaces with punctures \cite{Boggi2009}, and for some subgroups of general mapping class groups \cite{klukowski2024congruence}.

We refer to $\widecheck\Mod (V)$ as the \emph{procongruence handlybody group of} $V$.  Largely motivated by Grothendieck's anabelian geometry \cite{Grothendieck1997}, the procongruence mapping class groups have received considerable attention.  There are number of objects used to study the procongruence mapping class group, e.g. the procongruence curve and pants complexes \cite{BoggiFunar2023curvepants,Gropper2023,Wilkes2020pd2}.

\begin{prob}[Procongruence complexes]
    Study the action of $\widecheck\Mod(V)$ on the procongruence curve and pants complexes.
\end{prob}

\begin{prob}[Procongruence Ivanov's Theorem]
    Define a procongruence disc complex and compute its automorphism group.
\end{prob}

\subsection{Goodness}
Let $\mathbf G$ be a topological group.  A $\mathbf G$-module $M$ is called \emph{discrete} if $M$ has the discrete topology and the $\mathbf{G}$-action is continuous.  

Let $G$ be a finitely generated group and suppose that $M$ is a $\widehat G$-module which is finite as an abelian group.  We have that the natural map $\iota\colon G\to \widehat G$ induces a homomorphism on cohomology
\[ \iota_n\colon H^n_c(\widehat G;M)\to H^n(G;M). \]
In \cite{Serre1997}, Serre defines a group to be $n$-\emph{good} if for all $i\leq n$ the map $\iota_i$ is an isomorphism for every finite discrete $\widehat G$-module $M$.  A group is \emph{good in the sense of Serre} if it is $n$-good for all $n$.

\begin{prob}[Goodness]
    Is $\Mod(V)$ good in the sense of Serre?
\end{prob}

It appears that, in general, goodness is open for both mapping class groups and $\Out(F_N)$.  However, it can be deduced for braid groups using work of Lorensen \cite{Lorensen2008}.

\subsection{Profinite rigidity}
Since taking profinite completions is functorial, an inclusion $i\colon H\to G$ of finitely generated residually finite groups induces a map $\widehat i\colon \widehat H\to \widehat G$.  A classical question of Grothendieck asks is $\widehat i$ an isomorphism if and only if $i$ is an isomorphism.  A group $G$ for which this holds is called \emph{Grothendieck rigid}.  Note there are many negative answers to this question, see for example \cite{PlatonovTavgen1986,BridsonGrunewald2004,Bridson2016,corson2023}.  We call any pair $(G,H)$ for which $\widehat i\colon H\to \widehat G$ is isomorphism but $H\not\cong G$, a \emph{Grothendieck pair}.

A finitely generated residually finite group $G$ is \emph{profinitely enveloping} if every finitely generated residually finite group $H$ with $\widehat H\cong \widehat G$ is a Grothendieck pair $(G,H)$.  We say $G$ is \emph{profinitely rigid} if every finitely generated residually finite group $H$ with $\widehat H\cong \widehat G$ is in fact isomorphic to $G$.  For a survey on profinite rigidity the reader is referred to Reid's ICM notes \cite{Reid2018}.

\begin{prob}[Profinite rigidity] Let $V$ be a handlebody.
\begin{enumerate}
    \item Is $\Mod(V)$ Grothendieck rigid?
    \item Is $\Mod(V)$ profinitely enveloping?
    \item Is $\Mod(V)$ profinitely rigid?
\end{enumerate}
\end{prob}

Note that all of these questions are in general open for arithmetic groups, mapping class groups, braid groups, and $\Out(F_N)$.  In fact outside of the world of amenable groups, there are only very few examples of groups that are profinitely rigid \cite{BridsonMcReynoldsReidSpitler2020,BridsonMcReynoldsReidSpitler2021,CheethamWest2022} and they are all commensurable with fundamental groups of $2$ or $3$-dimensional hyperbolic manifolds.

It is known that for the mapping class group $\Mod(S_g)$ the smallest non-abelian finite quotient is $\Sp_{2g}(2)$ by \cite{KielakPierro2020} (see also \cite{Zimmermann2012} for $g=3,4$), for the Braid group \cite{Kolay2023} it is $\sym(n)$ (see \cite{Tan2024} for surface braid groups), and for $\Aut(F_N)$ it is $\PSL_N(2)$ \cite{BaumeisterKielakPierro2019} (see \cite{MecchiaZimmermann2010} for $N=3,4$).

\begin{prob}[Finite quotients]
    What are the smallest non-abelian, non-nilpotent, and non-soluble quotients of $\Mod(V_g)$?   Which finite simple groups are quotients of $\Mod(V_g)$?
\end{prob}

A natural guess for the smallest non-soluble quotient is $\PSL_g(2)$ obtained by the factorisation $\Mod(V_g)\onto \Out(F_g)\onto \PSL_g(2)$.

\section{Problems in (co)homology}


\subsection{A very brief overview}

For background reading on (co)homological properties of the handlebody group, an excellent starting point is the introduction of Hainaut and Petersen's paper \cite{HainautPetersen2023}. Some of the salient features of $H^*(\Mod(V))$ are:

\begin{itemize}
\item $\Mod(V_g)$ has virtual cohomological dimension equal to $4g-5$, which is the same dimension as $\Mod(S_g)$ \cite[Theorem~1.1]{Hirose2003}.
\item $H_1(\Mod(V_g))$ was computed by Wajnryb in \cite{Wajnryb1998}. This had a small error in the $g=2$ case which was corrected in \cite[Lemma~2.4]{IshidaSato2017}.
\item Hatcher and Wahl showed that the maps $\Mod(V_g^1) \to \Mod(V_{g+1}^1)$ given by attaching a handle satisfy homology stability. As a consequence they show that $H_i(\Mod(V_g))$ is independent of $g$ when $g\geq 2i+4$ \cite[Corollary~1.9]{HatcherWahl2010}.  Generalisations with non-trivial coefficient systems have been considered by Randal-Williams and Wahl \cite[Theorem~J]{RandalWilliamsWahl2017}.
\end{itemize}

Hainaut and Petersen show that the quotient $\mathcal{H}_g^\epsilon/\Mod(V_g)$ is an \emph{orbifold classifying space} for the handlebody group \cite{HainautPetersen2023}. They combine this with work of Chan--Galatius--Payne \cite{ChanGalatiusPayne2021} to show that the codimension one Betti numbers of $\Mod(V_g)$  grow at least exponentially in $g$ \cite[Corollary~1.6]{HainautPetersen2023}.

\subsection{Problems related to the dualising module}

A truncated version of $\mathcal{H}_g^\epsilon$ was used in \cite{PetersenWade2024} to show that $\Mod(V)$ is a \emph{virtual duality group}, in the sense of Bieri and Eckmann \cite{BieriEckmann1973}. In particular, the main theorem of \cite{PetersenWade2024} shows that \[ H^k(\Mod(V_g);\mathbb{Q}) \cong H_{4g-5-k}(\Mod(V_g);D\otimes_\ZZ \QQ), \] where $D$ is the \emph{dualising module} of $\Mod(V_g)$. Furthermore, an explicit description of $D$ is given as the homology of the complex of \emph{non simple disc systems} in $V_g$.

\begin{prob}[Coinvariants of the dualising module]
    Let $D$ denote the dualising module for $\Mod(V)$.
    \begin{enumerate}
        \item Describe a generating set for $D$.  
        \item Is $D$ a cyclic $\QQ\Mod(V)$-module?
        \item Does the rational cohomology of $\Mod(V_g)$ vanish in degree $4g-5$? 
    \end{enumerate}
\end{prob}

Note that for this last point $H^{4g-5}(\Mod(V);\QQ)$ is isomorphic to the coinvariants $(D\otimes_\ZZ \QQ)_{\Mod(V)}$.  For the mapping class group, Harer \cite{Harer1986} showed that the dualising module of $\Mod(S_g)$ is given by the reduced homology of the curve complex. Broaddus then answered the analogous problems to parts (1) and (2) of the above in \cite{Broaddus2012} by giving an explicit description of a nontrivial sphere in $\mathcal{C}(S_g)$ which generates the homology under the orbit of the mapping class group. Note that as the disc complex is a contractible subspace of the curve complex, one has no hope of directly importing Broaddus' sphere to this setting. Church, Farb and Putman then showed how to use Broaddus' work to prove (3) in the affirmative for $\Mod(S_g)$ \cite{ChurchFarbPutman2012}.

Another feature that is known for the mapping class group but not in the handlebody case is that the curve complex is \emph{homotopy equivalent} to a wedge of spheres. In \cite{PetersenWade2024}, the complex of non simple disc systems $\NS(V)$ was only shown to be \emph{homology equivalent} to a wedge of spheres (of dimension $2g-3$ when $b=p=0$ and dimension $2g-4+b+p$ when $b+p >0$). 

\begin{prob}[Connectivity of the non simple disc systems]
Is $\NS(V_g^b)$ simply connected when either $g \geq 3$ or $b> 0$ and $2g - 4 +b\geq 2$ ?  
\end{prob}

The complex $\NS(V_g^b)$ is simply connected when $g \geq 4$ as in this case $\NS(V_g^b)$ contains the entire 2-skeleton of $\mathcal{D}(V_g^b)$, which is contractible. However this problem is particularly delicate when $g=1,2$ (and $b$ is arbitrary), as in this situation many of the 2-cells in the full disc complex come from simple systems.

\subsection{Injectivity of a very natural map}

Using Giansiracusa's work \cite{Giansiracusa2011}, Borinsky, Br\"uck, and Willwacher showed that there exists a subspace \[\text{gr}_0H^*(\Mod(V_g)) \subseteq H^*(\Mod(V_g))\] such that \[ \text{gr}_0H^*(\Mod(V_g)) \cong H^*(\Out(F_g)) \]
(see \cite[Corollary~1.8]{BorinskyBruckWillwacher2025} -- the $\text{gr}_0$ piece of the cohomology comes from a natural grading on an associated graph complex). This abstract isomorphism leads to a very natural problem:

\begin{prob}[Injectivity of the map on cohomology]
Is the map \[\pi^* \colon H^*(\Out(F_g)) \to H^*(\Mod(V_g))\] induced by the projection $\pi \colon \Mod(V_g) \to \Out(F_g)$ injective?
\end{prob}

Let $\text{cv}_g$ be the unparametrised version of Culler-Vogtmann's Outer space. This can be identified with the space of marked, core, metric graphs with fundamental group $F_g$. There is a $\pi$-equivariant map $\Pi \colon \mathcal{H}^\epsilon_g \to \text{cv}_g$ from a handlebody horoball to $\text{cv}_g$ obtained by taking the simple system of meridians of length less than $\epsilon$ to their dual graph, with a curve of length $l$ determining an edge of length $\log(\epsilon/l)$ (the moduli space version of this map appears in Chan--Galatius--Payne \cite{ChanGalatiusPayne2021}). Giansiracusa's theorem proceeds by studying a similar map (see the discussion after Theorem~A in \cite{Giansiracusa2011}), so one might be able to compare the details of its proof with the description appearing in \cite[Corollary~1.8]{BorinskyBruckWillwacher2025}.




\section{Homology growth and \texorpdfstring{$\ell^p$}{lp}-cohomology}

\subsection{Computation of homology growth and \texorpdfstring{$\ell^2$}{l2}-Betti numbers}
Let $\Gamma$ be a residually finite group of type $\mathsf{F}$. L\"{u}ck's celebrated approximation theorem \cite{Lueck1994}, states that if $(\Gamma_k)_{k \in \mathbb{N}}$ is a descending sequence of finite index normal subgroups of $\Gamma$
such that $\bigcap_{k \in \mathbb{N}} \Gamma_k = 1$, then 
\[b_i^{(2)}(\Gamma) = \lim_{k \to \infty}\frac{\mathrm{dim}_{\mathbb{Q}}H_i(\Gamma_k, \mathbb{Q})}{[\Gamma \colon \Gamma_k]}.\]
The quantity on the left hand side being the $i$th $\ell^2$-Betti number of $\Gamma$, a homological invariant which has found many applications in topology and group theory (see \cite{Lueck2002}).  Any sequence of subgroups as above is an example of a \emph{Farber sequence}, see \cite{Farber1998} for a definition.

In analogy we have the \emph{$\FF$-homology growth} for $\FF$ a field and the \emph{homology torsion growth}, which for a Farber sequence $(\Gamma_k)_{k \in \mathbb{N}}$ of $\Gamma$ are defined as
\begin{align*}b^{\FF}_j(\Gamma;(\Gamma_k))&\coloneqq \limsup_{k\to \infty} \frac{\dim_{\FF} H_j (\Gamma_k; \FF)}{[\Gamma : \Gamma_k]} \quad \text{and} \\
t_j^{(2)}(\Gamma;(\Gamma_k))&\coloneqq \limsup_{k\to \infty} \frac{\log | H_j(\Gamma_k;\mathbb Z)_{\rm tors}|}{[\Gamma:\Gamma_k]}\end{align*}
respectively. 

At this point we stress that the definitions above \emph{depend} on the choice of Farber sequence and may not even be limits.  Indeed, the deep part of the statement of L\"uck's approximation theorem is that the limit exists and is independent of the choice of chain.  In the setting above, if the limsup is a genuine limit and there is no dependence of the choice of Farber sequence we denote the invariants by $b^{\FF}_j(\Gamma)$ and $t_j^{(2)}(\Gamma)$ respectively.

A powerful tool for showing the vanishing of the invariants $b^\FF_j(G)$ and $t_j^{(2)}(G)$ is the \emph{cheap rebuilding property} of Abert, Bergeron, Fraczyk, and Gaboriau \cite{AbertBergeronFraczykGaboriau2021}.  Whilst we will not recall the somewhat techincal definition here and insetad refer the reader to \emph{ibid}, we note that the technique has been applied successfully in a number of settings including $\Out(\aster_{i=1}^n\Z/2)$ \cite{GaboriauGuerchHorbez2022}, inner amenable groups \cite{Uschold2024}, and mapping tori of polynomially growing automorphisms \cite{AndrewHughesKudlinska2022,AndrewHughesKudlinskaGuerch2023}.  

\begin{thm}\label{t:rebuilding}
    Let $g\geq 2$.  Then, $\Mod(V_g)$ has the cheap $\alpha$-rebuilding property for all $\alpha\geq 0$. 
 In particular, for each $j\geq 0$ we have $\betti_j(V_g)=0$, and for every Farber sequence $b^\FF_j(V_g)=0$, and $t_j^{(2)}(V_g)=0$. 
\end{thm}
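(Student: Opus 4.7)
The plan is to apply the inductive criterion of Abert--Bergeron--Fraczyk--Gaboriau \cite{AbertBergeronFraczykGaboriau2021} to the action of $\Mod(V)$ on the disc complex $\mathcal{D}(V)$: a group of type $\mathsf{F}$ acting cocompactly on a contractible finite-dimensional $CW$-complex inherits cheap $\alpha$-rebuilding from its cell stabilisers. The disc complex is contractible, finite-dimensional, and admits a cocompact action by $\Mod(V)$, so it is a natural candidate. For the induction to close one should prove the result for the whole family of spotted handlebodies $V_g^{b,p}$, which is the class that appears as soon as one iteratively cuts along disc systems.

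The induction would run on a complexity $\xi(V_g^{b,p}) = 3g - 3 + b + p$. For the inductive step, the exact sequence (\ref{eqn.stab}) realises a finite-index subgroup $\Stab^0(\sigma)$ of each simplex stabiliser as an extension
\[ 1 \to \ZZ^k \to \Stab^0(\sigma) \to \bigoplus_{i=1}^l \Mod(V_i^\circ) \to 1, \]
where each $V_i^\circ$ has strictly smaller complexity than $V$. The cheap $\alpha$-rebuilding property is stable under finite direct products, amenable extensions (in particular by $\ZZ^k$), and passage to finite-index overgroups, so the inductive hypothesis applied to each $\Mod(V_i^\circ)$ delivers cheap $\alpha$-rebuilding for $\Stab(\sigma)$. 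The ABFG inheritance theorem then promotes the property from stabilisers to $\Mod(V_g^{b,p})$.

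The base cases are where $\xi$ is minimal: $V$ is a ball carrying a few marked discs or punctures, and by the three-dimensional Alexander trick the mapping class groups that arise are finite (at worst permuting the marked data), which have cheap $\alpha$-rebuilding trivially. The main obstacle I anticipate is the quantitative bookkeeping inside the ABFG framework: cheap $\alpha$-rebuilding for every $\alpha \geq 0$ requires effective and uniform control of the constants produced at each closure or inheritance step, and one must check that these stay well-behaved through an induction of depth $\Theta(g)$. A secondary issue is confirming that the spotted disc complex is contractible with cocompact action for every $(g,b,p)$ arising in the induction, which should follow from the usual surgery and disc-replacement arguments but needs to be verified carefully.
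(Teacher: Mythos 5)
Your overall route is the same as the paper's: apply the Abert--Bergeron--Fraczyk--Gaboriau combination theorem to the action of $\Mod(V_g)$ on the contractible, cocompact disc complex, and feed in the structure of cell stabilisers coming from \eqref{eqn.stab}. The genuine divergence is in how the stabilisers are treated. You run an induction on complexity over all spotted handlebodies $V_g^{b,p}$, invoking the inductive hypothesis for the quotient $\bigoplus_i \Mod(V_i^\circ)$ and then an ``amenable extension'' closure for the kernel $\ZZ^k$. The paper shows this induction is unnecessary: for every cell the kernel of \eqref{eqn.stab} contains a normal $\ZZ^n$ with $n\geq 1$ (generated by twists about the meridians of the simplex), and \cite[Corollary~10.13(2),(3)]{AbertBergeronFraczykGaboriau2021} already gives the cheap $\alpha$-rebuilding property for any residually finite group of type $\mathsf{F}_\alpha$ containing such a normal subgroup; one only needs the quotient handlebody groups to be of type $\mathsf{F}_\infty$, not to satisfy any rebuilding hypothesis. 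This short-circuit removes exactly the parts of your plan you flag as delicate: there is no induction of depth $\Theta(g)$, no constant bookkeeping (the ABFG closure statements are qualitative per $\alpha$), and no need to establish contractibility or cocompactness for spotted disc complexes, since only the closed disc complex is ever used.

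Two points in your version are genuine gaps as written. First, the combination theorem requires that any element stabilising a cell fixes it pointwise; elements of $\Mod(V_g)$ can permute the meridians of a simplex, so one must first pass to a finite-index subgroup of pure mapping classes (viewing $\Mod(V_g)\leq \Mod(S_g)$) and then return to $\Mod(V_g)$ via the finite-index inheritance statement \cite[Corollary~10.13(1)]{AbertBergeronFraczykGaboriau2021}; your outline skips this. Second, your base case is incorrect: the handlebody groups of spotted balls and low-genus spotted handlebodies are in general infinite (they contain twists about the spots and braidings of spots on the boundary sphere), so the Alexander-trick ``finite group'' claim fails, and the closure property you would need for the inductive step (rebuilding for the quotient plus amenable kernel) is not the form in which ABFG state their extension result, which asks for a \emph{normal subgroup} with rebuilding inside a group of type $\mathsf{F}_\alpha$. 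Both issues are repairable, but repairing the second in the ABFG framework naturally leads you to use the kernel $\ZZ^k$ itself as the normal subgroup with rebuilding --- at which point the inductive hypothesis is never used and you have recovered the paper's argument.
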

\begin{proof}
By \cite[Theorem~10.10]{AbertBergeronFraczykGaboriau2021} it suffices to establish, for all $\alpha\geq 0$, the cheap $\alpha$-rebuilding property for $\Mod(V_g)$.  To this end we will use the Abert--Bergeron--Fraczyk--Gaboriau combination theorem \cite[Theorem~10.9]{AbertBergeronFraczykGaboriau2021}, which says the following: 

\emph{Let $\Gamma$ be a residually finite group acting on a CW-complex $\Omega$ in such a way that
any element stabilising a cell fixes it pointwise. Let $\alpha \in \NN$. Suppose that:  $\Gamma\backslash\Omega$ has finite $\alpha$-skeleton; $\Omega$ is $(\alpha-1)$-connected;  for each cell $\omega \in \Omega$ of dimension $j \le \alpha$ the stabiliser $\Stab_{\Gamma}(\omega)$ has the cheap $(\alpha-j)$-rebuilding property. 
Then $\Gamma$ itself has the cheap $\alpha$-rebuilding property.}

In our case we will take $X$ to be the disc complex which is a contractible cocompact $\Mod(V_g)$-space.  Another result \cite[Corollary~10.13(1)]{AbertBergeronFraczykGaboriau2021}, states:

\emph{If $H$ is a finite index subgroup of $G$ and $H$ has then cheap $\alpha$-rebuilding property, then so does $G$.} 

Using this result we may pass to a torsion-free finite index subgroup $\Mod_0(V_g)$ of $\Mod(V_g)$ where every mapping class is pure (here we view $\Mod(V_g)$ as a subgroup of $\Mod(S_g)$).  Thus, by the combination theorem we only have to establish the cheap rebuilding property for the stabilisers of $\Mod_0(V_g)$ acting on the disc complex.  

Let $H$ be a cell stabiliser of $\Mod_0(V_g)$ acting on $X$.   Our goal now is to apply \cite[Corollary~10.13(2) and (3)]{AbertBergeronFraczykGaboriau2021} to $H$, which when combined states: 

\emph{If a group $G$ of type $\mathsf{F}_\alpha$ has a normal subgroup $\Z^n$, then $G$ has the cheap $\alpha$-rebuilding property.} 

Now, by \eqref{eqn.stab} we see that $H$ has a finite index subgroup $H_0$ such that $H_0$ has a normal subgroup isomorphic to $\Z^n$ for some $n\geq 1$ and $H_0/\Z^n$ is type $\mathsf{F}_\infty$. It follows from the previous result that $H_0$ has the cheap $\alpha$-rebuilding property for all $\alpha\geq 0$.  Whence the result.
\end{proof}

By the equality $\chi(G)=\sum_{i\geq 0}(-1)^i\betti_i(G)$ we recover the following result of Hirose \cite[Theorem~1.2]{Hirose2003}.  

\begin{corollary}
    $\chi(\Mod(V))=0$.
\end{corollary}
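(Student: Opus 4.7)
The plan is to invoke Atiyah's $L^2$-index theorem, which asserts that for any group $G$ of type $\mathsf{F}$ one has the equality
\[\chi(G) = \sum_{i \geq 0} (-1)^i \betti_i(G)\]
already stated in the corollary, and more generally for virtually type $\mathsf{F}$ groups (taking the rational Euler characteristic on the left) by multiplicativity of both sides under finite-index inclusion.

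Applying this to $\Mod(V_g)$ requires only that the group be virtually of type $\mathsf{F}$. This is standard: $\Mod(V_g)$ sits inside the virtually torsion-free group $\Mod(S_g)$, and on a torsion-free finite-index subgroup $\Mod_0(V_g)$ (for instance the pure subgroup used in the proof of Theorem~\ref{t:rebuilding}) one obtains a finite $K(\Gamma,1)$, consistent with Hirose's computation $\vcd(\Mod(V_g)) = 4g-5$. The rebuilding machinery of Theorem~\ref{t:rebuilding} is in any case implicitly working at this level of generality.

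With these ingredients the proof reduces to a one-liner: Theorem~\ref{t:rebuilding} gives $\betti_i(\Mod(V_g)) = 0$ for all $i \geq 0$, so every term in Atiyah's alternating sum vanishes and $\chi(\Mod_0(V_g))=0$; dividing by the finite index then yields $\chi(\Mod(V_g)) = 0$. There is no real mathematical obstacle — the statement is a formal consequence of the vanishing of all $\ell^2$-Betti numbers — and the only bookkeeping is to check the type $\mathsf{F}$ hypothesis, which is already available in the literature.
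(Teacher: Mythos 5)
Your argument is exactly the paper's: the corollary is deduced from the identity $\chi(G)=\sum_{i\geq 0}(-1)^i\betti_i(G)$ together with the vanishing of all $\ell^2$-Betti numbers from Theorem~\ref{t:rebuilding}, with the (virtual) finiteness hypotheses being standard background. Your extra bookkeeping about passing to a torsion-free finite-index subgroup and dividing by the index is fine but does not change the route.
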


\subsection{\texorpdfstring{$\ell^2$}{l2}-torsion}
A conjecture of L\"uck \cite[Conjecture~1.11(3)]{Lueck2013} predicts that the alternating sum $\sum_{j\geq 0} (-1)^j t_j^{(2)}(G)$ equals the $\ell^2$-torsion $\rho^{(2)}(G)$ of $G$.  See \cite{Lueck2002} for a definition of $\ell^2$-torsion.  Since we have computed that $t_j(\Mod(V))=0$ for all $j$, we raise the following question.
\begin{prob}[$\ell^2$-torsion]
    Is $\rho^{(2)}(\Mod(V))=0$?
\end{prob}

\subsection{\texorpdfstring{$\ell^p$}{lp}-cohomology}
Let $G$ be a group and let $C^\bullet(G)$ denote the singular cochains on $G$ with differential $\delta$.  One can the define $\ell^p$-cohomology $H_{(p)}^\ast(G)$ for any $p\in[1,\infty]$ by considering $\ell^p$-summable cochains $C_{(p)}^\bullet(G)\subseteq C^\bullet(G)$ and then taking cohomology with respect to the differential $\delta_{(p)}=\delta|_{C_{(p)}^\bullet}$.  In general the $\ell^p$-cohomology groups are not Banach spaces, to remedy this one defines the \emph{reduced $\ell^p$-cohomology} $\overline H^\ast_{(p)}(G)$ by taking the quotient by the closure of the image of $\delta$. Theorem~\ref{t:rebuilding} shows that the \emph{reduced} $\ell^2$-cohomology is trivial. 

\begin{prob}[$\ell^p$-cohomology]
    Compute $H^\ast_{(p)}(\Mod(V))$ and $\overline H^\ast_{(p)}(\Mod(V))$.
\end{prob}

\bibliographystyle{halpha}
\bibliography{refs.bib}

\end{document}